\pgfplotsset{compat=1.18}
\theoremstyle{definition}
\theoremstyle{plain}
\newtheorem{theorem}{Theorem}[section]
\newtheorem{lemma}[theorem]{Lemma}
\newtheorem{corollary}[theorem]{Corollary}
\theoremstyle{remark}
\@ifdefinable\skpt{\def\skpt#1\skpt{}}{}
\author{Antoine Lhomme, Nicolas Catusse, Nadia Brauner\\
Univ. Grenoble Alpes, CNRS, Grenoble INP, G-SCOP, 38000 Grenoble, France}
\title{On the convergence of computational methods\\ for the online bin stretching problem}
\begin{document}
\maketitle

\begin{abstract}
    Online bin stretching is an online packing problem where some of the best known lower and upper bounds were found through computational searches. The limiting factor in obtaining better bounds with such methods is the computational time allowed. However, there is still no theoretical guarantee that such methods do converge towards the optimal online performance. This paper shows that such methods do, in fact, converge; moreover, bounds on the gap to the optimal are also given. These results frame a theoretical foundation for the convergence of computational approaches for online problems.
\end{abstract}

\section{Online bin stretching and computational methods}

The online bin stretching problem, introduced in \cite{AZAR200117}, is defined as follows: a finite sequence of items, each characterized by its size (represented by a real number), must be placed into $m$ bins. The items are revealed sequentially; an item, when revealed, must be placed irremediably into a bin before the next one is shown. The whole set of items given is also known to fit inside $m$ bins of unit size.
The objective is to place all the items into the bins so that, in the end, the load of the fullest bin is minimized. An algorithm that places items into bins has its performance measured through its worst-case input; the performance of an algorithm is called in this problem the \textit{stretching factor}\footnote{If the sequence of items fits \textit{exactly} into $m$ bins of unit size, then the stretching factor may also be seen as a competitive ratio.}. This problem may also be interpreted as an online scheduling problem on $m$ parallel identical machines where the objective is to minimize the makespan (the total time to execute all tasks): using scheduling notations, the problem may be written as $P_m|online, OPT=1|C_{max}$, with $OPT=1$ indicating that the optimal makespan is known in advance to be of unit duration and with $online$ meaning that items are given in an online fashion. 

Optimal algorithms (in the sense of stretching factor) for various numbers of available bins are still unknown; some research has been done on both lower and upper bounds for the optimal stretching factor. Interestingly, some of the best known lower and upper bounds (algorithms) for this problem were recently found through computational searches: see \cite{Gabay2017, BOHM20221, lhomme2022online} for lower bounds and \cite{Liesk} for upper bounds -- additionally \cite{ROB} improved the best known algorithm to 1.495 for large $m$, although without using computational searches. Figure~\ref{obs_results} depicts the current state of the art on this problem. Solid squares or solid circles correspond to values that were found through a computational search. Finding optimal online algorithms is still a challenge when the number of bins $m$ is greater than 2. Note that the gap between upper and lower bounds is still important.

\begin{figure}[htbp]
    \centering
    \begin{tikzpicture}
    \begin{axis}[
        xlabel={Number of bins $m$},
        xmin=1, xmax=10,
        ymin=1.3, ymax=1.55,
        xtick={2, 3, 4, 5, 6, 7, 8, 9},
        xticklabels={2, 3, 4, 5, 6, 7, 8, $m\geq 9$},
        ytick={1.3, 1.33, 1.35, 1.4, 1.45, 1.5},
        legend pos=north west,
        ymajorgrids=true,
        grid style=dashed,
    ]
    
    \addplot[
        scatter,
        point meta=explicit symbolic, scatter/classes={
        a={mark=square,red}, b={mark=square*,red}
        }]
        coordinates {
        (2,1.3333) [a]
        (3,1.375) [a]
        (4,1.393) [b]
        (5,1.410) [b]
        (6,1.429) [b]
        (7,1.455) [b]
        (8,1.462) [b]
        (9,1.495) [a]
        };
        % \addlegendentry{Upper bound}
        % \addlegendentry{Upper bound 2}
    \addplot[
        scatter,
        point meta=explicit symbolic, scatter/classes={
        c={mark=o,blue}, d={mark=*,blue}
        }]
        coordinates {
        (2,1.3333) [c]
        (3,1.3659) [d]
        (4,1.35714) [d]
        (5,1.35714) [d]
        (6,1.3636) [d]
        (7,1.3636) [d]
        (8,1.3636) [d]
        (9,1.3333) [c]
        };
        % \addlegendentry{Lower bound}
        \legend{Upper bound,,Lower bound,}
    \end{axis}
    
    \end{tikzpicture}
    \caption{Best known bounds on the minimal stretching factor of deterministic algorithms for the online bin stretching problem. Solid squares and circles represent bounds that were found with the methods presented in Sections~\ref{section:lower bound method} and \ref{section:upper bound method}.}
    \label{obs_results}
\end{figure}
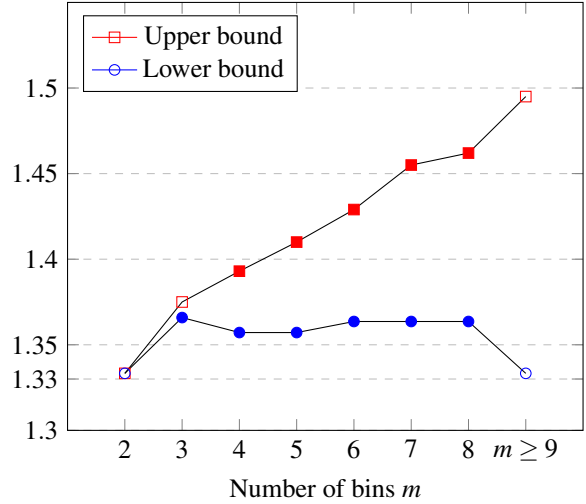

The computational method to find lower bounds may be summarized as follows: the problem is modelled as a two-player zero-sum game, where one player, the \textit{adversary}, sends items and the other player, the \textit{algorithm}, places the items into bins. The algorithm aims to minimize the load of the largest bin. By restricting the adversary to a finite set of possible item sizes, the game becomes finite -- and hence its min-max value may be computed through a standard min-max algorithm. Since the adversary player is restricted in this game, the min-max value is a lower bound of the best achievable competitive ratio of online deterministic algorithms. By increasing the set of items available to the adversary, one may hope to improve the lower bounds derived from the corresponding game; however, this also greatly expands the game size: the limiting factor of this method in obtaining better bounds becomes computation time. Figure~\ref{tree_proof_4/3} gives an example of an adversarial strategy (from \cite{AZAR200117}) that proves the lower bound $\frac{4}{3}$ when 2 bins are available -- such a strategy may be computed through a standard min-max algorithm.

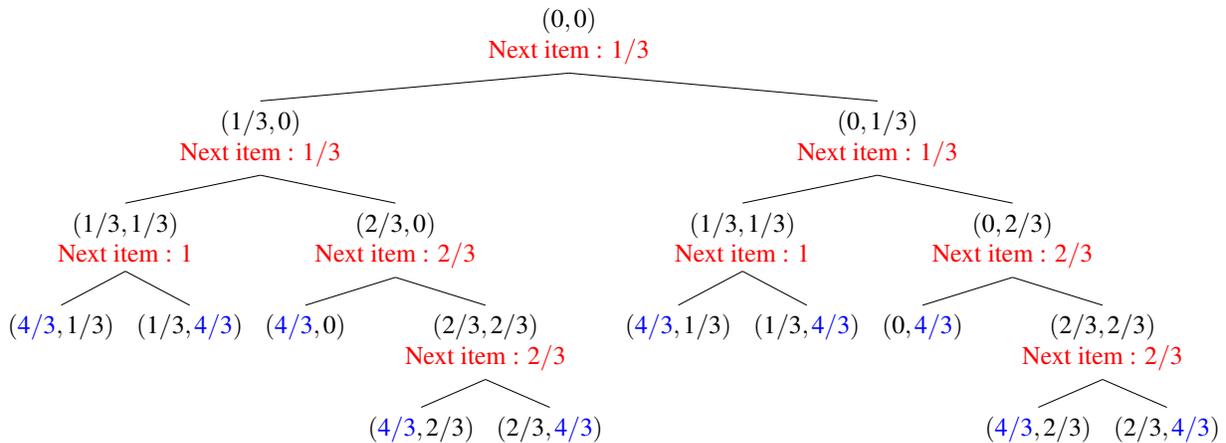
\begin{figure}[htpb]
    \centering
    \resizebox{\linewidth}{!}{
	\begin{tikzpicture}
	    \tikzset{every tree node/.style={align=center,anchor=north}}
	    \tikzset{level distance=40pt}
	    \Tree
	    [.{$(0,0)$\\{\color{red} Next item : $1/3$}}
	    [.{$(1/3, 0)$\\{\color{red} Next item : $1/3$}}
	    [.{$(1/3, 1/3)$\\{\color{red} Next item : $1$}}
	    [.{$({\color{blue} 4/3}, 1/3)$} ]
	    [.{$(1/3, {\color{blue} 4/3})$} ]
	    ]
	    [.{$(2/3, 0)$\\{\color{red} Next item : $2/3$}}
	    [.{$({\color{blue} 4/3}, 0)$} ]
	    [.{$(2/3, 2/3)$\\{\color{red} Next item : $2/3$}}
	    [.{$({\color{blue} 4/3}, 2/3)$} ]
	    [.{$(2/3, {\color{blue} 4/3})$} ]
	    ]
	    ]
	    ]
	    [.{$(0, 1/3)$\\{\color{red} Next item : $1/3$}}
	    [.{$(1/3, 1/3)$\\{\color{red} Next item : $1$}}
	    [.{$({\color{blue} 4/3}, 1/3)$} ]
	    [.{$(1/3, {\color{blue} 4/3})$} ]
	    ]
	    [.{$(0, 2/3)$\\{\color{red} Next item : $2/3$}}
	    [.{$(0, {\color{blue} 4/3})$} ]
	    [.{$(2/3, 2/3)$\\{\color{red} Next item : $2/3$}}
	    [.{$({\color{blue} 4/3}, 2/3)$} ]
	    [.{$(2/3, {\color{blue} 4/3})$} ]
	    ]
	    ]
	    ]
	    ]
	\end{tikzpicture}
	}
    \caption{Adversarial strategy showing the lower bound of $\frac{4}{3}$ for the online bin stretching problem where 2 bins are available. In parentheses: occupied volume in each bin.}
    \label{tree_proof_4/3}
\end{figure}

To construct upper bounds, \textit{i.e.}, algorithms, the same kind of technique has been proposed in \cite{Liesk}. The problem is once again modelled as a game between the adversary and the algorithm. To ensure that a strategy for the algorithm player in that game corresponds to a true online bin stretching algorithm, the algorithm strategy needs to be able to process any item sequence. Since item sizes may be any real numbers, the method proposed by \cite{Liesk} is to split possible item sizes into a finite number of intervals, and then to identify item sizes as the corresponding interval: the adversary player does not send an item of some given size but rather an item of size within some interval. In such a game, a strategy for the algorithm player is a valid online bin stretching algorithm. As such, the min-max value of that game yields an upper bound on the optimal performance of an online algorithm. 

One may then hope that increasing the quantity of intervals results in more precise algorithms and thus better upper bounds. Once again, doing so increases significantly the size of the game tree which makes computation times the limiting factor to obtain better bounds.

To summarize, the heart of the computational methods to obtain the lower and upper bounds consists of min-max searches in a game tree. In order to obtain better proofs, one may increase the possibilities of items that may be sent -- however, doing so increases exponentially the game tree size. As such, computation times are the limiting factor for obtaining better bounds. 
A very natural question is then: does increasing item possibilities yield bounds as close as possible to the true optimum value of the online bin stretching problems; in other words, do the lower and upper bound methods for the online bin stretching problem converge, and at what speed? Our paper shows that both methods do indeed converge towards the optimal performance of an online algorithm; moreover, the speed of convergence of these methods will also be discussed.

The idea to approximate the competitive ratio of online algorithms was notably developed in \cite{CRAS}, where the existence of schemes to approximate the optimal competitive ratio for a wide class of scheduling problems is proved. However, that result is theoretical, and no efficient method is proposed; to the best of our knowledge, that technique has not been applied successfully on some problem to improve bounds via computational searches. Our work hence differs in that we show the convergence of a specific computational technique, which proved most of the best-known bounds today. 

\subsection{Computational method for lower bounds}\label{section:lower bound method}
The lower bound method for the online bin stretching problem is detailed here. Let $g$ be some integer corresponding to the game \textit{granularity}, let $m$ be the number of bins available. Consider the following zero-sum, two player game, between the \textit{algorithm} and the \textit{adversary}:

The adversary sends an item of size within $\{\frac{1}{g}, \frac{2}{g}, \dots, 1\}$ satisfying the constraint that the sequence of items sent must fit into $m$ bins of size~$1$; the algorithm must then place that item into some bin of its choice. This is repeated until the adversary may no longer send an item (this occurs eventually because of the constraint that the items fit into $m$ bins of size~$1$); the score of the game is then the load of the fullest bin. 

The min-max value of that game (where the adversary aims to maximize and the algorithm to minimize) is a lower bound on the performance of any online algorithm. Hence, a min-max algorithm yields a lower bound on the optimal stretching factor for online bin stretching. 

Some works have been done in order to improve the efficiency of the min-max search \cite{Gabay2017, BOHM20221, lhomme2022online}.

\subsection{Computational method for upper bounds}\label{section:upper bound method}

To construct upper bounds, the same kind of technique as for lower bounds has been used with success in \cite{Liesk}. In the lower bound game, finding a strategy for the adversarial player also induces a proof of lower bound for the online bin stretching problem. In order to find upper bounds, a promising idea could be to find a strategy for the algorithm player. However, doing so does not yield a true online bin stretching algorithm: such a strategy is only able to process items of size $x/g$ for $x$ integer, when $g$ is the granularity parameter of the lower bound game. To circumvent this issue, \cite{Liesk} proposed to consider item \textit{classes} which correspond to intervals of possible item sizes or bin loads. A class is represented by an integer $\bar y$ and corresponds to the interval $]\frac{\bar y}{g}, \frac{\bar y+1}{g}]$. 
When placing an item of class $\bar y$ into a bin of load class $\bar b$, the load of the bin becomes the interval $]\frac{\bar y + \bar b}{g}, \frac{\bar y + \bar b + 2}{g}]$ which is too wide for being a class. To deal with this issue, the adversary is also able to ``cheat'' by choosing, once the item has been placed into a bin, if the resulting interval becomes of class $\bar y + \bar b$ or of class $\bar y + \bar b + 1$. The latter is called an \textit{overflow}.

For some granularity parameter $g\in\mathbb N^*$ and a number of bins $m$, the \textit{upper bound game} is defined as follows:

The adversary moves are constrained by the following constraints, which must always be respected:
\begin{itemize}
    \item The sequence of item classes $\bar y_1, \bar y_2, \dots, \bar y_k$ (seen as integers) fits into $m$ bins of size $g-1$. If this constraint is violated, it means that regardless of the true item sizes, it is impossible to fit the items into $m$ bins of unit size (the original online bin stretching constraint).
    \item The sum of bin load classes (as integers) must be less than or equal to $mg-1$. If this constraint is violated, it implies that the item sequence given by the adversary cannot fit into $m$ bins of unit size, as the weights of the items given is strictly greater than the sum of bin load classes.
    \item To let the game progress, if the adversary sends an item of class $\bar 0$ then an overflow must occur.
\end{itemize}

The adversary sends an item of class $\bar y \in \mathbb N$. The algorithm places the received item into a bin. The adversary may then choose whether an overflow occurs or not, \textit{i.e.}, if the class of the bin increases by $\bar y$ or by $\bar y + 1$. When the adversary is unable to send any item, the game ends and the resulting score is the upper bound of the fullest bin load, \textit{i.e.}, the score is the highest class of the load of a bin plus one. 

A strategy for the algorithm player in this game is a decision tree that decides for any incoming item class into which bin to place that item. Such a strategy is also a valid online bin stretching algorithm -- hence the min-max value of that game is an upper bound on the optimal bin stretching factor for online bin stretching.

\section{Convergence}
In all that follows, consider the number of bins $m$ fixed, and define by $\mathcal A$ the set of online bin stretching algorithms and $\mathcal I$ the set of possible input item sequences. Let $A\in\mathcal A$ be an online algorithm. If $A(I)$ denotes the load of the fullest bin after packing the items from the sequence $I\in\mathcal I$ according to $A$, then the stretching factor $r(A)$ of $A$ is defined as:
$$r(A) = \sup_{I \in \mathcal I} A(I)$$
Naturally, the aim is to design algorithms as efficient as possible; thus we are interested in the value of the optimum stretching factor:
$$v^* = \inf_{A\in\mathcal A} r(A)$$ 

While it seems fairly reasonable that the lower bound method converges towards $v^*$, it is much less obvious for the upper bound method. The adversary player is able to ``cheat'' via overflows in the upper bound game. While increasing the granularity reduces the power of one single overflow, it also increases the length of the game, hence the adversary is able to send more items with overflows. It is unclear whether the adversary has too much power in the upper bound game to derive arbitrarily good upper bounds or not.

The following will show that the upper bound method does in fact converge towards $v^*$. The proof will also give some insight on the convergence speed, and may be used to prove the convergence of the lower bound method as well as the same kind of result on its speed.

\begin{theorem}\label{thm:upper bound convergence}
    
The upper bound method described in Section~\ref{section:upper bound method} does converge towards the optimum stretching factor. Formally, if $(u_g)_{g\in\mathbb N^*}$ denotes the sequence of upper bounds found for each granularity, then: $$\lim_{g\to +\infty} u_g = v^*$$

\end{theorem}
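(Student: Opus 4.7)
The plan is to establish $\lim_g u_g = v^*$ by proving separately the two inequalities $v^*\leq u_g$ (for every $g$) and $\limsup_g u_g \leq v^*$.

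For the first direction I would use the observation, already recorded in Section~\ref{section:upper bound method}, that any algorithm-player strategy $\sigma$ in the upper bound game defines a valid online bin stretching algorithm $\tilde A_\sigma$: map each real incoming item $x$ to its class $\lceil gx\rceil-1$ and follow $\sigma$. The plan is to show that for every real input $I=(x_1,\dots,x_n)$ that fits into $m$ unit bins, there is an adversary strategy in the game that forces the game score to be at least $\tilde A_\sigma(I)$. Concretely, the adversary would send classes $\bar y_j = \lceil gx_j\rceil-1$ and choose the overflow flag at each step so as to preserve the invariant $\bar b_i+1 \geq g L_i$ (with $L_i$ the current real load in bin $i$ under $\tilde A_\sigma$); a short induction would also check that the game constraints are respected because $I$ fits in $m$ unit bins. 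Applied to $\sigma = \sigma_g^*$ an optimal strategy, this gives $r(\tilde A_g) \leq u_g$; combined with $r(\tilde A_g)\geq v^*$, the first direction follows.

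For the harder direction, I would fix $\epsilon>0$, pick $A\in\mathcal A$ with $r(A)\leq v^*+\epsilon$, and design a strategy $\sigma_g^A$ with game value bounded by $r(A) + \eta(g)$, where $\eta(g)\to 0$ as $g\to\infty$; sending $\epsilon \to 0$ would then close the argument. The strategy $\sigma_g^A$ would maintain a parallel simulation of $A$ on a real sequence $(s_1,s_2,\dots)$ built adaptively: when the adversary sends class $\bar y_j$, $\sigma_g^A$ picks some $s_j \in (\bar y_j/g,(\bar y_j+1)/g]$, asks $A$ where it would place an item of size $s_j$, and places the game item there. The two competing goals in the choice of $s_j$ would be (a) keeping the sequence $(s_1,\dots,s_j)$ fit-able in $m$ unit bins, so that $A$'s stretching guarantee $\max_i L_i\leq r(A)$ applies, and (b) keeping $A$'s load $L_i$ within $O(1/g)$ of the game class $\bar b_i/g$, so that the score $(\max_i\bar b_i+1)/g$ is close to $\max_i L_i$.

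The main obstacle will be reconciling (a) and (b). The naive choice $s_j = (\bar y_j+1)/g$ satisfies (b) exactly but can break (a): the per-item inflation of $1/g$, summed over the up to $\Theta(mg)$ items available to the adversary, can make the sequence overflow $m$ unit bins. The naive choice $s_j = \bar y_j/g$ satisfies (a) but leaves the overflow-induced gap $\bar b_i - g L_i$ unbounded. My plan to break this deadlock is to work with a rescaled variant $A^{(g)}$ of $A$ obtained by homothety, which is valid on inputs fitting in $m$ bins of size $1+1/g$ and whose stretching factor is at most $(1+1/g)\,r(A)$; the $s_j$'s are then picked adaptively, as large as possible subject to keeping the cumulative sequence fit-able in $m$ bins of size $1+1/g$, which is feasible thanks to the slack the adversary's game constraints leave in the class sequence. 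Class-$0$ items, which force overflows in the game, would be handled by a dedicated balancing rule placing them in the least-loaded bin, and the residual mismatch between $A^{(g)}$'s load and the game class would be absorbed via a potential-function argument that exploits the global game constraint $\sum_i \bar b_i \leq mg-1$. This should yield $\eta(g) = O(1/g)$. The same proof architecture, specialized to the simpler lower bound game of Section~\ref{section:lower bound method}, should also support the companion convergence result for the lower bound method mentioned right after the theorem.
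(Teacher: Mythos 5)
Your first direction ($v^*\leq u_g$) is fine and matches what the paper takes as the definition of soundness for the upper bound game: a strategy for the algorithm player is a genuine online algorithm, and the adversary's overflow choices can always certify an upper bound on the true loads. Your second direction has the same high-level architecture as the paper's proof --- simulate a near-optimal real algorithm $A$ inside the upper bound game and control the drift between the game's bin classes and $A$'s simulated loads --- and you correctly isolate the central tension: choosing $s_j=(\bar y_j+1)/g$ keeps the drift at zero but inflates the simulated input beyond what fits in $m$ unit bins, while choosing $s_j$ near $\bar y_j/g$ keeps feasibility but lets overflows accumulate drift. The gap is in your proposed resolution. Picking $s_j$ ``as large as possible subject to feasibility'' in bins of size $1+1/g$ gives only $O(m)$ total volume slack, whereas the total inflation over a game with up to $\Theta(mg)$ items is $\Theta(m)$; so after a constant number of units of inflation your greedy choice is forced down to essentially $\bar y_j/g$ for the remaining items, and from then on every overflow on an item landing in a given bin contributes up to $1/g$ of unmatched class to that bin. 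The adversary can concentrate $\Theta(g)$ such overflows on low-class (but nonzero-class) items placed by $A$ into a single bin, producing a gap of constant order in load units between that bin's game class and its simulated load. Your two proposed repairs do not close this: the balancing rule only governs class-$\bar 0$ items, and the global constraint $\sum_i\bar b_i\leq mg-1$ bounds the \emph{total} number of overflows (hence the total drift, by about $m$ load units) but says nothing about how that drift distributes across bins. The claimed rate $\eta(g)=O(1/g)$ is certainly too strong --- the paper itself only achieves $O(m/\sqrt g)$ --- and as sketched I do not see that your $\eta(g)$ tends to $0$ at all.

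The missing ideas are exactly the ones the paper introduces. First, always feed $A$ the upper end $\bar y_j+1$, so that an overflow costs nothing and the per-bin class never exceeds $A$'s believed load by more than the contribution of a single small item; the price is that $A$ is systematically over-fed. Second, repair the over-feeding online: keep a per-bin counter $\delta_{i,j}$ of the gap between the game class and $A$'s load, split items at the threshold $\sqrt g$, and divert small items into bins with $\delta<0$ \emph{without} consulting $A$, which caps $\delta$ between $-m\sqrt g-1$ and $\sqrt g$. Third, prove that the over-fed input is still valid for $A$: this is not automatic and requires both enlarging $A$'s offline bins to $g'=g+O(\sqrt g+ m\sqrt{g})$ (your relative slack of $1/g$ is far too small) and a repacking argument (Lemma~\ref{lemma:camion crous}) showing that incrementing every item of a feasible instance by one unit preserves feasibility in bins enlarged by $\sqrt h$. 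Without some substitute for these three ingredients, your construction does not establish $\limsup_g u_g\leq v^*$.
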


\textit{Idea of the proof}: From $A$ an online bin stretching algorithm and $g\in \mathbb N^*$, we construct $A^g$ an algorithm strategy for the upper bound game with granularity $g$, so that the stretching factor of $A^g$ is close to the stretching factor of $A$:
$$| r(A) - r(A^g) | = \underset{g\to +\infty}{o}(1)$$  

Since the min-max value $u_g$ obtained from the upper bound game with granularity $g$ is less than the performance of any algorithm strategy in that game, the following holds:
$$\forall A\in\mathcal A,\;v^*\leq u_g \leq r(A^g) \leq r(A) + \underset{g\to +\infty}{o}(1)$$
Selecting some algorithm $A$ close to the optimum stretching factor thus implies:
$$v^* \leq u_g \leq \inf_{A\in\mathcal A} r(A) + \underset{g\to +\infty}{o}(1) = v^* + \underset{g\to +\infty}{o}(1)$$
Hence:
$$\lim_{g\to +\infty} u_g = v^* $$

\textbf{Definitions and notations:}

\begin{itemize}
    \item An online bin stretching algorithm $A$ will be defined as a sequence of functions $(A_1, A_2, \dots)$, such that the $i$-th function outputs the $i$-th move. $A_i$ takes as input a finite sequence of items of length $i$, and outputs the index of the bin into which to place the $i$-th item. 
    \item An algorithm strategy $A^g$ for the upper bound game will be defined as a sequence of functions $(A^g_1, A^g_2, \dots)$. $A^g_i$ takes as input a finite sequence of item classes of length $i$: $(\bar y_1, \dots, \bar y_i)$ and a finite sequence of binary values of length $i-1$ corresponding to if an overflow occurred in one of the previous moves: $(o_1, \dots, o_{i-1})$, where $o_j = 1$ if and only if an overflow occurred after placing the $j$-th item into a bin. $A^g_i$ outputs the index of the bin into which to place the $i$-th item.
    \item The concatenation of finite sequences will be denoted by $\oplus$: $(1, 2, 3) \oplus (4, 5) = (1, 2, 3, 4, 5)$. The empty sequence is denoted by $\emptyset$.
\end{itemize}

\begin{proof}

    Let $g\in\mathbb N^*$ and let $A$ be an online bin stretching algorithm where the offline bin size\footnote{Online bin stretching is defined with the constraint that items fit into $m$ bins of unit size -- to deal with integers rather than fractions for item sizes, the problem is scaled up to bins of size $g$ or $g'$ without loss of generality.} is fixed to be $g' = g(1 + \frac{m}{\sqrt g} + \frac{1}{g}) + \sqrt{g(1 + \frac{m}{\sqrt g} + \frac{1}{g})} = g + \underset{g\to +\infty}{o}(g)$ -- the reason for that will be made clear later on. We aim to construct $A^g$ which behaves like $A$ in most cases. 

    We construct inductively $A^g = (A^g_1, A^g_2, \dots)$, as well as variables $\delta_{i} = (\delta_{i, 1}, \dots, \delta_{i, m})$ and functions $M_i$.
    
    \textbf{Idea behind these variables:} The constructed algorithm $A^g$ tries to behave like $A$. $A^g$ will keep track of the solutions proposed by $A$ -- both algorithms are executed together. As the adversary does have additional power (\textit{moves}) in the upper bound game via overflows, it is not possible for $A^g$ to directly ``copy'' the moves from $A$. For example, upon receiving an item of class 1, \textit{i.e.}, of size between 1 and 2, we could ``ask'' the algorithm $A$ what decision it would take if the item was of size 2. However, upon following the decision of $A$ in our constructed algorithm, no overflow occurs -- the item represented by the class $1$ was perhaps very close to size 1. Errors may hence accumulate by following blindly algorithm $A$. 
    
    The algorithm $A^g$ will always assume that an overflow occurs. If it does not, then a game state may be reached that the real algorithm $A$ may not usually reach. As such, we keep track of the difference between what $A$ believes is the current load of each bin and the actual load class of each bin with variables $\delta$: $\delta_{i, j}$ corresponds to the difference between the load of the class (as a number) of the $j$-th bin in the constructed algorithm $A^g$ and the load of the $j$-th bin in algorithm $A$ just before receiving the $i$-th item.

    Initially, $\delta_{1, j} = 0 \;\forall j\in \{1, \dots, m\}$ : there is no gap between $A$ and $A^g$ prior to receiving items. The idea behind the construction that follows is to place small items into bins for which $\delta$ is negative without taking into account the decision of algorithm $A$. Doing so allows $A^g$ to catch up with $A$. Functions $M_k$ ($M$ stands for \textit{memory}) will keep track of the items for which $A^g$ behaves like $A$: for some input $((\bar y_1, \bar y_2, \dots, \bar y_i), (o_1, \dots, o_{i-1}))$ of the algorithm $A^g$ at step $i$, $M_i((\bar y_1, \bar y_2, \dots, \bar y_{i-1}), (o_1, \dots, o_{i-1}))$ will correspond to the sequence of item classes for which $A^g$ behaved like $A$ so far; initially, $M_1((\emptyset, \emptyset)) = \emptyset$. 
    
    The algorithm $A^g = (A^g_i)_{i\in\mathbb N^*}$ is defined alongside $M_i$ by induction on $i$.

    Let $(\bar I\oplus \bar y_i, o) = ((\bar y_1, \dots, \bar y_i), (o_1, \dots, o_{i-1}))$ be some input for algorithm $A^g$ at step $i$; let $k = |M(\bar I, o)|$ be the number of items for which $A^g$ behaved like $A$ so far. Define:

    $$
    A^g_i((\bar I\oplus \bar y_i, o)) = \begin{cases}
        A_{k+1}(M_{i}(\bar I, o)\oplus  (\bar y_i + 1)),\;\text{ if } \bar y_i \geq \sqrt{g}\\(\textit{large item, behave like }A\textit{ would on the item }\bar y_i +1)\\

        A_{k+1}(M_{i}(\bar I, o) \oplus  (\bar y_i + 1)),\; \text{ if } \bar y_i < \sqrt{g} \text{ and } \forall j\; \delta_{i, j}\geq 0\\ (\textit{small item and no gap exists, behave like }A)\\

        \min\limits_{j\in\{1, \dots, m\}} j \;|\; \delta_{i, j} < 0\; \text{ otherwise}\\
        (\textit{small item, bridge an existing gap})
    \end{cases}
    $$

    Then, the adversary decides if an overflow happens or not in the bin chosen by the algorithm, and fixes the binary value $o_i$.

    The term $M_i(\bar I\oplus \bar y_i, o\oplus o_i)$ is then defined accordingly as:
    $$
    M_{i+1}(\bar I\oplus \bar y_i, o\oplus o_i) = \begin{cases}
        M_{i}(\bar I, o)\oplus  (\bar y_i + 1),\;\text{ in the first or second case of the previous construction}\\
        M_i(\bar I, o)\; \text{ otherwise}
    \end{cases}
    $$

    Let $j$ be the bin chosen by $A^g_i$. In cases 1 and 2, define $\delta_{i+1, j} = \delta_{i, j} - 1 $ if there is no overflow in the chosen bin, otherwise $\delta$ stays the same for the next iteration.

    In case 3, change the corresponding $\delta_{i,j}$ for the next iteration by adding $\bar y_i + o_i$. For any other bin $j'\neq j$, $\delta_{i+1, j'} = \delta_{i, j'}$. 

    A graphical example of the execution of algorithm $A^g$ may be found in Figure~\ref{fig:execution ag}. 
\setlength{\fboxsep}{0pt}
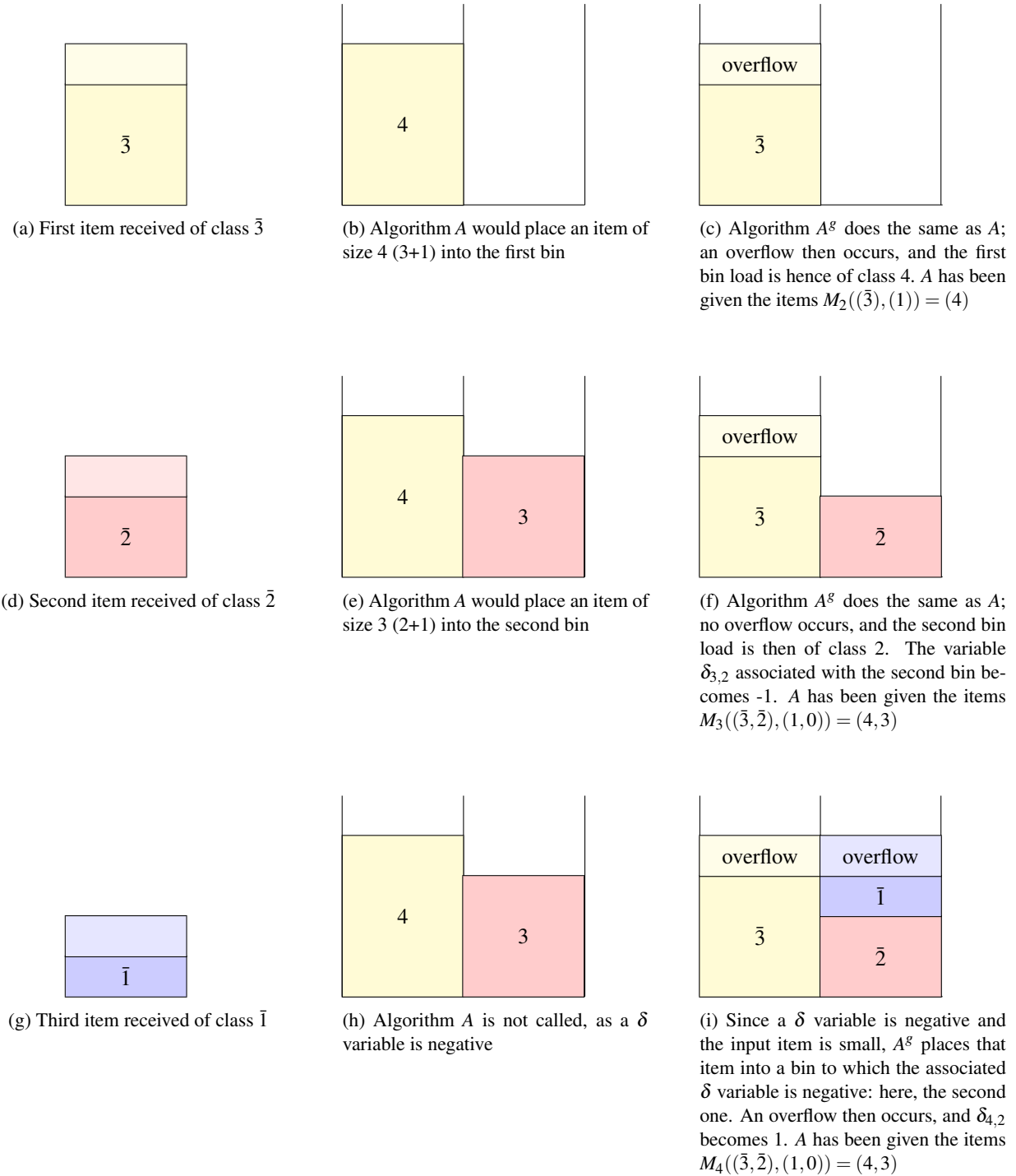
\begin{figure}[htbp]\unitlength = .65cm
     \centering
     \begin{subfigure}[t]{0.3\textwidth}

         \begin{picture}(2,6.5)

		\put(2,0){\colorbox{yellow!20}{\framebox(3,3){$\bar 3$}}}
        \put(2,3){\colorbox{yellow!10}{\framebox(3,1){}}}
	    \end{picture}
         \caption{First item received of class~$\bar 3$}
         \label{firstitem}
     \end{subfigure}
     \hfill
     \begin{subfigure}[t]{0.3\textwidth}
         \begin{picture}(2,6.5)
		\put(-.01,0){\line(0,1){5}}
		\put(3.01,0){\line(0,1){5}}
		\put(6.03,0){\line(0,1){5}}
		\put(-.01,0){\line(1,0){6.04}}

		\put(0,0){\colorbox{yellow!20}{\framebox(3,4){$4$}}}
	    \end{picture}
         \caption{Algorithm $A$ would place an item of size $4$ (3+1) into the first bin}
         \label{seconditem}
     \end{subfigure}
     \hfill
     \begin{subfigure}[t]{0.3\textwidth}
         \begin{picture}(2,6.5)
		\put(0,0){\line(0,1){5}}
		\put(3,0){\line(0,1){5}}
		\put(6,0){\line(0,1){5}}
		\put(0,0){\line(1,0){6.04}}

		\put(0,0){\colorbox{yellow!20}{\framebox(3,3){$\bar 3$}}}
        \put(0,3){\colorbox{yellow!10}{\framebox(3,1){overflow}}}
	    \end{picture}
         \caption{Algorithm $A^g$ does the same as $A$; an overflow then occurs, and the first bin load is hence of class $4$. $A$ has been given the items $M_2((\bar 3), (1)) = (4)$}
         \label{thirditem}
     \end{subfigure}
        %%%%%%%%%%%%%%%%%%%%%%%%%%%%%%%%%%%%%%%%%%%%%%%%%%%%%%%
        
        \begin{subfigure}[t]{0.3\textwidth}

            \begin{picture}(2,6.5)
   
           \put(2,0){\colorbox{red!20}{\framebox(3,2){$\bar 2$}}}
           \put(2,2){\colorbox{red!10}{\framebox(3,1){}}}
           \end{picture}
            \caption{Second item received of class~$\bar 2$}
            \label{firstitem2}
        \end{subfigure}
        \hfill
        \begin{subfigure}[t]{0.3\textwidth}
            \begin{picture}(2,6.5)
           \put(-.01,0){\line(0,1){5}}
           \put(3.01,0){\line(0,1){5}}
           \put(6.03,0){\line(0,1){5}}
           \put(-.01,0){\line(1,0){6.04}}

           \put(0,0){\colorbox{yellow!20}{\framebox(3,4){$4$}}}
           \put(3,0){\colorbox{red!20}{\framebox(3,3){$3$}}}
           \end{picture}
            \caption{Algorithm $A$ would place an item of size $3$ (2+1) into the second bin}
            \label{seconditem2}
        \end{subfigure}
        \hfill
        \begin{subfigure}[t]{0.3\textwidth}
            \begin{picture}(2,6.5)
           \put(0,0){\line(0,1){5}}
           \put(3,0){\line(0,1){5}}
           \put(6,0){\line(0,1){5}}
           \put(0,0){\line(1,0){6.04}}

           \put(0,0){\colorbox{yellow!20}{\framebox(3,3){$\bar 3$}}}
           \put(0,3){\colorbox{yellow!10}{\framebox(3,1){overflow}}}
           \put(3,0){\colorbox{red!20}{\framebox(3,2){$\bar 2$}}}

           \end{picture}
            \caption{Algorithm $A^g$ does the same as $A$; no overflow occurs, and the second bin load is then of class $2$. The variable $\delta_{3, 2}$ associated with the second bin becomes -1. $A$ has been given the items $M_3((\bar 3, \bar 2), (1, 0)) = (4, 3)$}
            \label{thirditem2}
        \end{subfigure}

        %%%%%%%%%%%%%%%%%%%%%%%%%%%%%%%%%%%%%%%%%%%%%%%%%%

        \begin{subfigure}[t]{0.3\textwidth}

            \begin{picture}(2,6.5)
   
           \put(2,0){\colorbox{blue!20}{\framebox(3,1){$\bar 1$}}}
           \put(2,1){\colorbox{blue!10}{\framebox(3,1){}}}
           \end{picture}
            \caption{Third item received of class~$\bar 1$}
            \label{firstitem3}
        \end{subfigure}
        \hfill
        \begin{subfigure}[t]{0.3\textwidth}

            \begin{picture}(2,6.5)
                \put(-.01,0){\line(0,1){5}}
                \put(3.01,0){\line(0,1){5}}
                \put(6.03,0){\line(0,1){5}}
                \put(-.01,0){\line(1,0){6.04}}

                \put(0,0){\colorbox{yellow!20}{\framebox(3,4){$4$}}}
                \put(3,0){\colorbox{red!20}{\framebox(3,3){$3$}}}
                \end{picture}

            \caption{Algorithm $A$ is not called, as a $\delta$ variable is negative}
            \label{second item3}
        \end{subfigure}
        \hfill
        \begin{subfigure}[t]{0.3\textwidth}
            \begin{picture}(2,6.5)
           \put(0,0){\line(0,1){5}}
           \put(3,0){\line(0,1){5}}
           \put(6,0){\line(0,1){5}}
           \put(0,0){\line(1,0){6.04}}

           \put(0,0){\colorbox{yellow!20}{\framebox(3,3){$\bar 3$}}}
           \put(0,3){\colorbox{yellow!10}{\framebox(3,1){overflow}}}
           \put(3,0){\colorbox{red!20}{\framebox(3,2){$\bar 2$}}}
           \put(3,2){\colorbox{blue!20}{\framebox(3,1){$\bar 1$}}}
           \put(3,3){\colorbox{blue!10}{\framebox(3,1){overflow}}}
           
           \end{picture}
            \caption{Since a $\delta$ variable is negative and the input item is small, $A^g$ places that item into a bin to which the associated $\delta$ variable is negative: here, the second one. An overflow then occurs, and $\delta_{4,2}$ becomes 1. $A$ has been given the items $M_4((\bar 3, \bar 2), (1, 0)) = (4, 3)$}
            \label{thirditem3}
        \end{subfigure}
        \caption{Example of an execution of algorithm $A^g$ with $m=2$ bins and $g \geq 4$}
        \label{fig:execution ag}
\end{figure}

    One may remark that the variable $\delta$ of a bin may only decrease by $1$ in the first and second cases of the algorithm if no overflow occurs. A variable may only increase if it's negative and may only increase by $\sqrt g + 1$. Since there can be at most $m\sqrt g$ large items, the following invariant holds:
    \begin{equation}\label{eq:bound on delta}
        \forall i\;\forall j,\;-m\sqrt g-1\leq \delta_{i, j} \leq \sqrt{g}
    \end{equation}

    As such, for any input sequence, the absolute difference between the load of a bin for algorithm $A^g$ and the load of that bin for algorithm $A$ is at most $m\sqrt g + 1$; by definition, the fullest bin of algorithm $A$ has a load less than $r(A)g'$. Thus, the stretching factor of $r(A^g)$ may be bounded:
    $$r(A^g) g \leq r(A) g' + (m\sqrt g + 1) +1$$
    
    \begin{equation}\label{eq:diff between loads is bounded}
        r(A^g) \leq \frac{r(A)  g' + (m\sqrt g + 1) +1}{g} = r(A) + \underset{g\to +\infty}{o}(1)
    \end{equation}
    The last term $+1$ in the above equations comes from the fact that bin loads are defined as intervals, thus the $+1$ is to consider an upper bound. As established in the idea of the proof, this shows that the upper bounds converge towards $v^*$; the previous inequality is valid for any algorithm $A$ and since $r(A^g)\geq u_g$, the following holds:
    $$v^*\leq u_g \leq \frac{v^*g' + m\sqrt g + 2}{g}$$
    This shows that upper bounds do converge towards $v^*$.

    We now prove that $A^g$ is well-defined. This is needed because the constructed algorithm $A^g$, when copying algorithm $A$'s behavior, sends slightly larger items than what they could be to algorithm $A$. This sequence of slightly larger items could be a non-valid input for $A$. The following shows that this is not the case if the offline bin size for algorithm $A$ is $g'$, which is slightly greater than $g$.

    First, consider the following lemma:
    
    \begin{lemma}\label{lemma:camion crous}
        Let $h\in\mathbb N^*$, let $I=(y_1, \dots, y_n)\in \mathbb{N}^n$ such that:
        \begin{enumerate}
            \item $(y_1, \dots, y_n)$ \text fit into $m$ bins of size $h$
            \item $n + \sum_{i=1}^n y_i  < mh$ 
        \end{enumerate}
        Then, the sequence of items  $(y_1 + 1, \dots, y_n + 1)$ fits into $m$ bins of size $h + \sqrt{h}$
    \end{lemma}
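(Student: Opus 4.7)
The plan is to build an explicit packing of $(y_1+1, \ldots, y_n+1)$ into $m$ bins of size $h + \sqrt h$ in two stages, reusing a witness packing of the original items. I fix any packing $P$ of $(y_1, \ldots, y_n)$ into $m$ bins of size $h$ (which exists by hypothesis~1), and split items at the threshold $\sqrt h$: call an item \emph{large} if $y_i \geq \sqrt h$ and \emph{small} otherwise.

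In the first stage I place each augmented large item $y_i + 1$ into the bin it occupied in $P$. Writing $L_j^{\mathrm{large}}$ for the large-load and $B_j$ for the number of large items of bin $j$ in $P$, every large item has size at least $\sqrt h$, hence $B_j \leq L_j^{\mathrm{large}}/\sqrt h$. The augmented large-load of bin $j$ is therefore at most $L_j^{\mathrm{large}}(1 + 1/\sqrt h) \leq h + \sqrt h$, so every bin stays admissible. In the second stage I run First Fit on the augmented small items (each an integer of size at most $\lceil \sqrt h \rceil$) into the residual capacities, and argue by contradiction that every small item is placed. If First Fit fails on some augmented small item $s_k$, then every bin has integer load strictly greater than $h + \sqrt h - s_k$. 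Since $h \in \mathbb{N}^*$, the number $\sqrt h$ is either a positive integer or irrational; in both cases the integrality of loads upgrades this strict inequality to $\text{load} \geq (h - s_k) + \lceil \sqrt h \rceil$. Summing over the $m$ bins gives a total packed load of at least $m(h - s_k) + m\lceil \sqrt h \rceil$, while the same total is bounded above by $\sum_{i=1}^n (y_i + 1) - s_k < mh - s_k$ by hypothesis~2. These bounds combine to $s_k > m\lceil \sqrt h \rceil/(m-1) \geq \lceil \sqrt h \rceil$ for $m \geq 2$ (the case $m=1$ being immediate from hypothesis~2), contradicting the small-item bound.

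The main obstacle I anticipate is precisely the non-integer $\sqrt h$ case in stage~2: a purely volumetric First Fit argument only forces $s_k > \sqrt h$ at a failure, which does not immediately contradict $s_k \leq \lceil \sqrt h \rceil$ when $\sqrt h$ is irrational. The sharpening that closes the gap is the arithmetic observation that because $h$ is a positive integer, $\sqrt h$ is either integer or irrational, so the strict inequality on integer loads can be lifted by a full unit via a ceiling; summing these sharpened bounds across bins then outruns the total volume budget from hypothesis~2. Once this step is in place the two-stage construction yields the required packing uniformly in $h$.
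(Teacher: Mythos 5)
Your proof is correct, but it takes a genuinely different route from the paper's. The paper increments all items in place inside the witness packing and then runs an exchange argument: a bin whose load exceeds $h+\sqrt h$ must contain more than $\sqrt h$ items, hence an item of size at most $\sqrt h+1$, which is moved to a bin of load below $h$ (such a bin exists by hypothesis~2 and cannot become overloaded); termination follows because the number of items in overloaded bins decreases. You instead split at the same threshold $\sqrt h$ but argue in two phases: large items stay in their original bins and provably never overflow (at most $h/\sqrt h$ of them per bin, so the $+1$'s add at most $\sqrt h$), while the small items are repacked from scratch by First Fit, with failure excluded by a global volume count against hypothesis~2. Both proofs pull on the same two levers --- the slack $n+\sum y_i<mh$ and the balance at $\sqrt h$ between ``few large items per bin'' and ``small items are movable'' --- but your version is more explicit about the arithmetic subtleties (integrality of loads versus a possibly irrational $\sqrt h$, handled via the ceiling sharpening, and the separate $m=1$ case), which the paper's terser repair argument glosses over; the price is a longer setup and a case analysis that the local exchange avoids. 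Either argument suffices for the lemma as stated.
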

    
    \begin{proof}
        \textit{(Lemma~\ref{lemma:camion crous})}

        To prove this lemma, start with a packing of the items $I$ such that no bin has a load over $h$ -- such a packing exists because of the first condition. Then, all item sizes increase by 1; some bins may hence have a load greater than $h+\sqrt h$. We will hence repair the packing by moving some items between bins. First, define the following two categories of bins based on their load: 
        \begin{itemize}
            \item if the load of a bin is strictly greater than $h+\sqrt{h}$, the bin is defined to be \textbf{critical}
            \item if the load is strictly smaller than $h$, the bin is \textbf{safe}
        \end{itemize}
        We aim to have no critical bins. First, observe that there are at least $\sqrt{h}$ items in a critical bin, hence one item has size at most $\frac{h+\sqrt{h}}{{\sqrt{h}}}=\sqrt{h}+1$. Observe also that because of condition 2, a safe bin always exists.

        Such an item may be moved to a safe bin, which cannot become critical. The operation may be repeated until no bin is critical. As the number of items in critical bins is strictly decreasing, eventually no critical bin exists. 
    \end{proof}

    With this lemma, we may now prove that the items given to $A$ are valid. Let $((y_1, \dots, y_n), (o_1, \dots, o_n))$ be an input for $A^g$. After execution of the algorithm on this input, $A^g$ also simulated the behavior of $A$ on some items. Remark that Equation~(\ref{eq:bound on delta}) implies that the difference in norm 1 between the loads of bins of the algorithm $A^g$ and $A$ (as vectors) is at most $m(m\sqrt g + 1)$; since $\sum \bar y_i + o_i < mg$ by constraint of the upper bound game, the items given to algorithm $A$ sum to at most $m(g + m\sqrt g + 1) = mg ( 1 + \frac{m}{\sqrt g} + \frac{1}{g})$. By defining $h = g ( 1 + \frac{m}{\sqrt g} + \frac{1}{g}) $, Lemma~\ref{lemma:camion crous} may be applied: items given to algorithm $A$ are thus guaranteed to fit into $m$ bins of size $g' = h+\sqrt h = g ( 1 + \frac{m}{\sqrt g} + \frac{1}{g}) + \sqrt{g ( 1 + \frac{m}{\sqrt g} + \frac{1}{g}) } = g + o(g)$.

    In conclusion, items given to $A$ do fit into $m$ bins of size $g'$, thus $A^g$ is well-defined, which concludes the proof of Theorem~\ref{thm:upper bound convergence}.

\end{proof}

It may be observed that this proof went a step further than just showing the convergence, as it showed the inequality:
\begin{equation}\label{eq:ineq in the proof between v and ug}
    v^*\leq u_g \leq \frac{v^*g' + m\sqrt g + 2}{g}
\end{equation}
Equation~(\ref{eq:ineq in the proof between v and ug}) may be written as:
\begin{corollary}
    In finding the best possible upper bound $u_g$ for some granularity $g\in\mathbb N^*$, one also obtains the lower bound:
$$ v^*\geq \left(u_g - \frac{m\sqrt g + 2}{g}\right)\frac{g}{g'}$$
\end{corollary}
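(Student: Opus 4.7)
The plan is to derive the corollary directly from the inequality~(\ref{eq:ineq in the proof between v and ug}) established in the proof of Theorem~\ref{thm:upper bound convergence}, namely
$$v^* \leq u_g \leq \frac{v^* g' + m\sqrt{g} + 2}{g}.$$
The corollary is only about the right-hand inequality of this chain, and it just needs to be solved for $v^*$.

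Starting from $u_g \leq \frac{v^* g' + m\sqrt{g} + 2}{g}$, I would multiply both sides by $g$ to obtain $g \, u_g \leq v^* g' + m\sqrt{g} + 2$, then subtract $m\sqrt{g} + 2$ to isolate the term containing $v^*$, giving $g\, u_g - m\sqrt{g} - 2 \leq v^* g'$. Dividing by $g' > 0$ and factoring $g$ out of the left-hand side yields
$$\left(u_g - \frac{m\sqrt{g} + 2}{g}\right)\frac{g}{g'} \leq v^*,$$
which is exactly the stated lower bound.

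There is essentially no obstacle here: the whole work has already been done inside the proof of the theorem, and only elementary algebra remains. The only thing worth mentioning for clarity is that $g' > 0$ so the division preserves the inequality, and that the inequality holds for any value of $u_g$ that is itself a valid upper bound obtained from the upper bound game at granularity $g$ — so in particular for the best possible one. I would write the proof in two or three lines, referencing Equation~(\ref{eq:ineq in the proof between v and ug}) and performing the rearrangement explicitly.
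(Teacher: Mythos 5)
Your proposal is correct and matches the paper exactly: the corollary is obtained by the same elementary rearrangement of Equation~(\ref{eq:ineq in the proof between v and ug}), which the paper performs implicitly. Nothing is missing.
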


In other words, this corollary gives information on the distance between the upper bound and the optimal stretching factor $v^*$.

The proof of Theorem~\ref{thm:upper bound convergence} constructs an algorithm $A^g$ for the upper bound game from an online bin stretching algorithm $A$. But that construction only made use of $A$ for items of the form $\frac{x}{g'}$ with $x$ integer. For some granularity $g'$, finding the best lower bound $\ell_{g'}$ at this granularity also implies the existence of an algorithm strategy in the lower bound game of this granularity with the same performance as the lower bound, since the lower bound game is finite. That algorithm strategy is only able to process items of the form $\frac{x}{g'}$ -- hence this strategy may play the role of $A$ in the proof of Theorem~\ref{thm:upper bound convergence}! Upon finding the min-max value $\ell_{g'}$ of the lower bound game for some granularity $g' = \left\lceil g ( 1 + \frac{m}{\sqrt g} + \frac{1}{g}) + \sqrt{g ( 1 + \frac{m}{\sqrt g} + \frac{1}{g}) }\;\right\rceil$, one may apply the construction in the proof of Theorem~\ref{thm:upper bound convergence} that constructs an algorithm strategy in the upper bound game, \textit{i.e.}, an algorithm with performance close to $\ell_{g'}$. In equation~(\ref{eq:diff between loads is bounded}) in the proof of Theorem~\ref{thm:upper bound convergence}, the term $r(A)$ may be replaced by $\ell_{g'}$ to obtain:

\begin{theorem}
    The following inequality holds:
    $$\ell_{g'} \leq v^* \leq \left( \ell_{g'} + \frac{m\sqrt g + 2}{g'}\right)\frac{g'}{g} $$
    This inequality also implies that the sequence of lower bounds $\ell_{g'}$ found at each granularity ${g'}$ converges towards $v^*$. 
\end{theorem}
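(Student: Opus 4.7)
The plan is to reuse the construction from the proof of Theorem~\ref{thm:upper bound convergence}, but with the abstract online bin stretching algorithm $A$ replaced by an algorithm strategy extracted from the lower bound game at granularity $g'$. The left-hand inequality $\ell_{g'} \leq v^*$ is the easy direction: the lower bound game is obtained from the original problem by restricting the adversary to items of the form $x/g'$, so its min-max value can only be smaller than the min-max value one would obtain without that restriction, which is exactly $v^*$. Formally, any $A \in \mathcal{A}$, restricted to inputs with items of the form $x/g'$, provides a valid algorithm strategy in the lower bound game whose worst case over restricted inputs is bounded by $r(A)$; minimizing over $A$ on both sides yields $\ell_{g'} \leq v^*$.

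For the right-hand inequality, I would first invoke the finiteness of the lower bound game at granularity $g'$ to extract an optimal algorithm strategy $A$ whose worst-case performance against that restricted adversary is exactly $\ell_{g'}$. Because every item in that game has size of the form $x/g'$, this $A$ fits precisely into the interface required by the construction of Theorem~\ref{thm:upper bound convergence} when the offline bin size is taken to be $g'$. Plugging $A$ into that construction produces $A^g$, an algorithm strategy in the upper bound game at granularity $g$, and Equation~(\ref{eq:diff between loads is bounded}) applied with $r(A) = \ell_{g'}$ yields $r(A^g) \leq (\ell_{g'}\, g' + m\sqrt g + 2)/g$. Combining this with $v^* \leq u_g \leq r(A^g)$ and factoring out $g'/g$ produces exactly the claimed inequality.

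Convergence then follows by a sandwich argument: since $g' = g + o(g)$, the ratio $g'/g \to 1$ and the corrective term $(m\sqrt g + 2)/g'$ vanishes as $g \to \infty$, so the right-hand side of the inequality collapses onto $\ell_{g'}$ and the latter is forced to $v^*$. The main (and only real) thing to verify carefully is that the construction of Theorem~\ref{thm:upper bound convergence} only ever feeds $A$ with sequences that are legal moves in the lower bound game at granularity $g'$. This amounts to two checks: the items passed to $A$ are integer multiples of $1/g'$, which is immediate from the construction since it only queries $A$ on items of integer class $\bar y_i + 1$; and these items together fit into $m$ bins of size $g'$, which is exactly the content of Lemma~\ref{lemma:camion crous} combined with the particular choice of $g'$ made in the proof of Theorem~\ref{thm:upper bound convergence}. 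Since both points were already established in that proof, no new work is required here.
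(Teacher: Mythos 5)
Your proposal follows essentially the same route as the paper: extract an optimal algorithm strategy from the finite lower bound game at granularity $g'$, observe that it only needs to process items of the form $x/g'$ so it can play the role of $A$ in the construction of Theorem~\ref{thm:upper bound convergence}, and substitute $\ell_{g'}$ for $r(A)$ in Equation~(\ref{eq:diff between loads is bounded}). The only additions are your (correct) explicit justifications of the easy direction $\ell_{g'}\leq v^*$ and of the convergence consequence, which the paper leaves implicit.
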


\textbf{Remark:} the given inequalities are too weak to deduce anything from the bounds known today. For instance, for 4 bins, with granularity $g=22$ the best algorithm strategy has a stretching factor of at most $31/22\approx 1.409$ (see \cite{Liesk}); the lower bound implied by $u_{22} = 31/22$ is $v^*\geq 0.212$... which is not even close to being a useful result -- for comparison, the best known lower bound for 4 bins is $19/14 \approx 1.3571$. Despite this, our result still holds practical interest: showing the convergence of computational methods gives a theoretical foundation for people aiming to push further computational methods in order to obtain even stronger bounds.

\section{Conclusion}
Computational methods for online problems hold much promise for constructing better bounds and algorithms with performance guarantees. In online bin stretching, most of the best bounds and algorithms known were found through such methods. This paper gives a theoretical foundation for such methods by showing their convergence towards the optimal online performance -- as well as giving a bound on the gap to that optimal. 
Beyond the purely theoretical interest of these results, an interesting perspective would be to use those bounds to accelerate computational searches in practice.

\bibliography{refs.bib}

\begin{thebibliography}{1}

\bibitem{AZAR200117}
Yossi Azar and Oded Regev.
\newblock On-line bin-stretching.
\newblock {\em Theoretical Computer Science}, 268(1):17--41, 2001.

\bibitem{ROB}
Martin B{\"{o}}hm, Matej Lieskovsk{\'{y}}, S{\"{o}}ren Schmitt, Jir{\'{\i}} Sgall, and Rob van Stee.
\newblock Improved online load balancing with known makespan.
\newblock In Amit Kumar and Noga Ron{-}Zewi, editors, {\em Approximation, Randomization, and Combinatorial Optimization. Algorithms and Techniques, {APPROX/RANDOM} 2024, August 28-30, 2024, London School of Economics, London, {UK}}, volume 317 of {\em LIPIcs}, pages 10:1--10:21. Schloss Dagstuhl - Leibniz-Zentrum f{\"{u}}r Informatik, 2024.

\bibitem{BOHM20221}
Martin Böhm and Bertrand Simon.
\newblock Discovering and certifying lower bounds for the online bin stretching problem.
\newblock {\em Theoretical Computer Science}, 938:1--15, 2022.

\bibitem{Gabay2017}
Micha{\"e}l Gabay, Nadia Brauner, and Vladimir Kotov.
\newblock Improved lower bounds for the online bin stretching problem.
\newblock {\em 4OR}, 15(2):183--199, Jun 2017.

\bibitem{lhomme2022online}
Antoine Lhomme, Olivier Romane, Nicolas Catusse, and Nadia Brauner.
\newblock Online bin stretching lower bounds: Improved search of computational proofs, 2022.
\newblock \url{https://arxiv.org/abs/2207.04931v2}.

\bibitem{Liesk}
Matej Lieskovsk{\'y}.
\newblock Better algorithms for online bin stretching via computer search.
\newblock In Jos{\'e}~A. Soto and Andreas Wiese, editors, {\em LATIN 2024: Theoretical Informatics}, pages 241--253, Cham, 2024. Springer Nature Switzerland.

\bibitem{CRAS}
Elisabeth L\"{u}bbecke, Olaf Maurer, Nicole Megow, and Andreas Wiese.
\newblock A new approach to online scheduling: Approximating the optimal competitive ratio.
\newblock {\em ACM Trans. Algorithms}, 13(1), dec 2016.

\end{thebibliography}
\bibliographystyle{plain}

\end{document}